\newtheorem{theorem}{Theorem}[section]
\newtheorem{lemma}[theorem]{Lemma}
\theoremstyle{definition}
\theoremstyle{remark}
\numberwithin{equation}{section}
\providecommand{\keywords}[1]{\textbf{\textit{Keywords:}} #1}
\providecommand{\subjclass}[1]{\textbf{\textit{MSC2020:}} #1}
\begin{document}

\nocite{*} 

\title{Closed-form formula for some recursively-defined integro-difference sequence of functions}

\author{Hailu Bikila Yadeta \\ email: \href{mailto:haybik@gmail.com}{haybik@gmail.com} }
  \affil{Dılla University, College of  Natural and Computational Sciences, Department of Mathematics, Dilla, Ethiopia}
\date{\today}
\maketitle
\noindent

\begin{abstract}
The main purpose of this  paper is to  derive  the closed form solution the sequence $(g_n)_{n\in \mathbb{N}}$ of integro-difference equations that is defined recursively as follows:
\begin{align*}
  g_1(x) & = \chi_{(-1/2, 1/2)} (x), \\
  g_{n+1}(x) & = g_n(x + 1/2)- g_n(x- 1/2) + \int_{x-\frac{1}{2}}^{x + \frac{1}{2}}  g_n(s)ds, \, n\in \mathbb{N},
\end{align*}
where $ g_1(x)= \chi_{(-1/2, 1/2)} (x) $ is the characteristic function of the unit interval $(-1/2, 1/2) $ has value equal to $ 1 $ on $(-1/2, 1/2) $ and $0$ elsewhere in $ \mathbb{R} $.
\end{abstract}

\noindent\keywords{binomial coefficient,  integro-difference equation, recursively-defined sequences, shift operator}\\
\subjclass{Primary 45E10, 39B12,39B22}\\
\subjclass{Secondary 26A18}

\section{Introduction}
 Integro difference equations are applicable in the modeling of certain real life problems. For example, in \cite{KLN} deterministic integro difference equations for a single, unstructured population living on a one-dimensional spatial habitat is typically written as
 \begin{equation}\label{eq:stateequation}
    n_{t+1} = \int_{\Omega} k(x,y)f(n_t(y))dy.
 \end{equation}
The state variable $n_t(x) $ is the population density at a location $x$ and a time $t$. Space is continuous while time is discrete. Thus $ x $ and $y$ are real numbers but $t$ is an integer $ \Omega $ is a spacial domain for the population and the model. The domain may be finite like the interval $(0,1) $. For some problems, however, it is convenient to take the domain and the limits of integration infinite. $\Omega =( -\infty, \infty) $. The dispersal kernel $k(x,y)$, can depend upon the source $y$ and the destination $x$ in some complicated way. To simplify matters, ecologists often assume a difference kernel,
$$   k(x,y)=k(x-y)$$
so that  the integro-difference equation  becomes convolution type
 $$   n_{t+1} = \int_{\Omega} k(x,y)f(n_t(y))dy.  $$
Such a convolution type integro-difference equation was used  for approximating the speed of spread of a population is presented in (\cite{FL}) as
\begin{equation}\label{eq:convolutiontype}
  N_{t+1}= R\int_{-\infty}^{\infty}k(x-y)N_t(y)dy= R( K \ast N_t)(x).
\end{equation}
With the localized initial condition $N_0(x)=\delta(x)$, the solution for (\ref{eq:convolutiontype})  is given by
$$  N_t(x)=  R^t K^{\ast t}(x), $$
where $K^{\ast t}(x) := \underbrace{ K \ast K \ast...\ast K}_{\text{t times}} $. The calculation of such a $t$-fold convolution  for arbitrary $K$ may  not be simple. In connection to some previous references and applications in other areas of study, the current author calculated, in \cite{HBY},  the closed form  expression of the recursively defined sequences of functions defined by recursive convolution

\begin{equation}\label{eq:integralrecursion}
  f_1(x)  = \chi_{(-1/2, 1/2)} (x),\quad f_{n+1}(x)  = \int_{x-\frac{1}{2}}^{x+\frac{1}{2}}  f_n(s)ds,  \, n\in \mathbb{N},
\end{equation}
was calculated as
\begin{equation}\label{eq:fnforintegral}
  f_n(x) = \frac{n}{2}\sum_{i=0}^{n}\frac{(-1)^{n-i}}{i!(n-i)!}\left( x-\frac{n}{2}+i\right)^{n-2}| x-\frac{n}{2}+i|,\quad n\geq 2.
\end{equation}
The advantageous tool in  that calculation was the symmetry of the elements of the sequence, with the initial element being the characteristic function of the  symmetric unit interval $(-1/2, 1/2)$.
The current work is  devoted to the calculation of explicit  formula for the recursive sequence of functions given by integro-difference equations. The similarity between the previous work and the current is  the choice of the same first element of the sequence, that is $ f_1(x)  = \chi_{(-1/2, 1/2)} (x)$, and the integral operator involved in the recursion. However the inclusion of the difference operators in the current  problem  adds  significantly to the difficulty of the problem. The difference operator which has no smoothing property is coupled with an integral operator which has a smoothing property, yields a sequence $ g_n $ that has same order of smoothness as the first element  $g_1$ of the sequence. The first element $ g_1(x)= = \chi_{(-1/2, 1/2)} (x) $  has jump discontinuities at the points $ x= -1 $ and $ x = 1 $.
 \section{ Explicit formula for sequences of functions recursively-defined by integrodifference equations }

 The main task here in  this section is to derive the closed-form expression of the sequence of functions define  recursively with specified integro-difference relation:
\begin{equation}\label{eq:integrodifference}
  g_1(x)  = \chi_{(-1/2, 1/2)} (x),\quad g_{n+1}(x)  = g_n(x + 1/2)- g_n(x- 1/2) + \int_{x-\frac{1}{2}}^{x + \frac{1}{2}}  g_n(s)ds, \, n\in \mathbb{N}.
\end{equation}
 Let us define the shift operators $E^{\frac{1}{2}} $  and $ E^{- \frac{1}{2}}$ as follows:
\begin{equation}\label{leftandrightshifts}
    E^ {\frac{1}{2}}u(x):= u(x + 1/2), \quad  E^ {-\frac{1}{2}} u(x):= u(x-1/2).
\end{equation}
Let us denote  by $ L $, the operator which is the difference of the left and the right shift operators

\begin{equation}\label{eq:theoperatorL}
    L u(x):= \left( E^ {\frac{1}{2}} - E^ {-\frac{1}{2}}\right)u(x)= u(x +1/2) -u(x- 1/2).
 \end{equation}
 Let us denote by $ K $, the integral operator

 \begin{equation}\label{eq:theoperatorK}
     Ku(x):= \int_{x-\frac{1}{2}}^{x+\frac{1}{2}}u(s)ds.
 \end{equation}
 The integral operators $ K $ and the difference operator $ L $ are related as follows
 \begin{equation}\label{eq:KandLrelated}
     \frac{d}{dx} Ku(x):=  \frac{d}{dx}\int_{x-\frac{1}{2}}^{x+\frac{1}{2}}u(s)ds = u(x +1/2) -u(x-1/2)= Lu(x).
 \end{equation}
Using (\ref{eq:theoperatorL}) and (\ref{eq:theoperatorK}), the integrodifference recurrence relation (\ref{eq:integrodifference})
may be written as
\begin{equation}\label{eq:recurrencewithL}
   g_1(x)  = \chi_{(-1/2, 1/2)} (x),\quad g_{n+1}(x)  = L g_n(x) +  K g_n(x), \, n\in \mathbb{N},
\end{equation}
 and (\ref{eq:integralrecursion}) may be written as
 \begin{equation}\label{eq:recurrencewithK}
   f_1(x)  = \chi_{(-1/2, 1/2)} (x),\quad f_{n+1}(x)  =   K f_n(x), \, n\in \mathbb{N}.
\end{equation}

\begin{lemma}
 The integral operator $ K $ and the difference operator $ L $  commute.
 \begin{equation}\label{eq:commutative}
   L Ku(x)= L \int_{x-\frac{1}{2}}^{x + \frac{1}{2}} u(s)ds =\int_{x-\frac{1}{2}}^{x + \frac{1}{2}}L u(s)ds= KLu(x).
 \end{equation}
\end{lemma}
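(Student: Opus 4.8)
The plan is to prove the identity $LKu = KLu$ by direct computation, expanding both sides using the definitions of the operators $L$ and $K$ and verifying that the two resulting expressions coincide. The essential point is that $L$ acts by shifting and differencing in the variable $x$, whereas $K$ integrates over a window $[x-\tfrac12, x+\tfrac12]$; since the shift operators $E^{\pm 1/2}$ act on the outer variable $x$ while the integration variable $s$ is a bound (dummy) variable, the two operations do not interfere, and interchanging their order is legitimate.

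First I would expand the left-hand side. Applying $L$ to $Ku(x)=\int_{x-1/2}^{x+1/2}u(s)\,ds$ and using the definition \eqref{eq:theoperatorL}, I get
\begin{equation*}
  LKu(x) = \int_{x}^{x+1}u(s)\,ds - \int_{x-1}^{x}u(s)\,ds,
\end{equation*}
obtained by replacing $x$ with $x+\tfrac12$ and with $x-\tfrac12$ in the limits of integration. Next I would expand the right-hand side: applying $K$ to $Lu(s)=u(s+\tfrac12)-u(s-\tfrac12)$ gives
\begin{equation*}
  KLu(x) = \int_{x-1/2}^{x+1/2}\bigl(u(s+\tfrac12)-u(s-\tfrac12)\bigr)\,ds.
\end{equation*}
The plan is then to evaluate this by the substitution $t=s+\tfrac12$ in the first integral and $t=s-\tfrac12$ in the second, which shifts the limits of integration and yields exactly $\int_{x}^{x+1}u(t)\,dt-\int_{x-1}^{x}u(t)\,dt$, matching the left-hand side.

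The two computations meet in the middle, so the identity follows. I do not expect a serious obstacle here: the result is essentially the statement that differencing in the outer variable commutes with integration in a dummy variable, which is a change-of-variables argument. The one point requiring minor care is the legitimacy of moving $L$ inside the integral sign in the middle expression of \eqref{eq:commutative}, i.e. justifying $L\int_{x-1/2}^{x+1/2}u(s)\,ds=\int_{x-1/2}^{x+1/2}Lu(s)\,ds$; this is precisely what the two-sided substitution verifies, since the shifts commute with translation of the integration window. A remark worth including is that each $g_n$ is a compactly supported, bounded (piecewise continuous) function, so all the integrals converge absolutely and the manipulations are valid without any additional integrability hypotheses.
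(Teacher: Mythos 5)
Your proposal is correct and follows essentially the same route as the paper's proof: both sides are expanded via the definitions, and the change of variables $t=s\pm\tfrac12$ identifies $LKu(x)=\int_{x}^{x+1}u-\int_{x-1}^{x}u$ with $KLu(x)$ (the paper performs the same substitution, merely transforming the left side all the way into the right side rather than meeting in the middle). The added remark on compact support and absolute convergence is fine but not needed for the argument.
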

\begin{proof}
  By using the operational definition of $E^ {\frac{1}{2}}$ and $E^ {-\frac{1}{2}}$ that are given in (\ref{leftandrightshifts}),
  \begin{align}\label{proofofcommutative}
   LKu(x)=  L \int_{x-\frac{1}{2}}^{x + \frac{1}{2}} u(s)ds &= E^ {\frac{1}{2}} \int_{x-\frac{1}{2}}^{x + \frac{1}{2}}  u(s)ds - E^ {-\frac{1}{2}}\int_{x-\frac{1}{2}}^{x + \frac{1}{2}}  u(s)ds \nonumber \\
     & =  \int_{x}^{x+1} u(s)ds - \int_{x-1}^{x}u(s)ds  \nonumber \\
    & =   \int_{x-\frac{1}{2}}^{x+\frac{1}{2}}  u(s+1/2)ds -  \int_{x-\frac{1}{2}}^{x+\frac{1}{2}}  u(s-1/2)ds \nonumber  \\
     & = \int_{x-\frac{1}{2}}^{x+\frac{1}{2}} [ u(s+1/2) -u(s-1/2)]ds  \nonumber \\
     & = \int_{x-\frac{1}{2}}^{x+\frac{1}{2}} [E^ {\frac{1}{2}} - E^ {-\frac{1}{2}}] u(s) ds= K Lu(x).
  \end{align}
   Hence the Lemma is proved.
\end{proof}
Now we list down the first few elements of the sequence $g_n $. Let us see the connection between the sequence $g_n $ given in (\ref{eq:integrodifference}) and the sequence $ f_n $ given in (\ref{eq:fnforintegral}).
\begin{equation}\label{eq:g1eqalsf1}
  g_1(x):= f_1(x)= \chi_{(-1/2, 1/2)} (x).
\end{equation}
\begin{equation}\label{eq:g2relatedtof2}
 g_2(x):= L g_1(x) + Kg_1(x) =  L f_1(x) + Kf_1(x)=  Lf_1(x)+ f_2(x).
\end{equation}
Now by (\ref{eq:recurrencewithL}), (\ref{eq:g2relatedtof2}) and (\ref{eq:g1eqalsf1})
\begin{align}\label{eq:g3relatedtof3}
   g_3(x)&= L g_2(x) + K  g_2 (x)\nonumber \\
   & =L[ L f_1(x)+ f_2(x)] + K[L f_1(x)+ f_2(x)] \nonumber \\
   & = L^2 f_1(x)+ Lf_2(x) +  KLf_1(x) + Kf_2(x)  \nonumber \\
   & = L^2 f_1(x)+ Lf_2(x) +  L K f_1(x) + f_3(x) \nonumber \\
   & = L^2 f_1(x)+ Lf_2(x) +  Lf_2(x) + f_3(x)  \nonumber \\
   &= L^2 f_1(x)+ 2Lf_2(x)  + f_3(x).
   \end{align}
   From these observation  we shall set a relation between the sequences $ f_n $ and $g_n$ given in the following theorem.
   \begin{theorem}
   For appropriate numerical coefficients $ c_{n,i}$, the sequence $ f_n $ given in (\ref{eq:integralrecursion}) and the sequence $ g_n $ given in (\ref{eq:integrodifference}) are related as follows
     \begin{equation}\label{eq:gnrelatedtofn}
       g_n(x)=\sum_{k=0}^{n-1} c_{n,k} L^k f_{n-k}(x).
     \end{equation}
   \end{theorem}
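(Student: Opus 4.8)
The plan is to prove the statement by induction on $n$, extracting along the way a recurrence for the coefficients $c_{n,k}$ that turns out to be Pascal's rule. For the base case $n=1$, the identity $g_1 = f_1$ from \eqref{eq:g1eqalsf1} forces the single coefficient $c_{1,0}=1$, and the claimed decomposition \eqref{eq:gnrelatedtofn} holds trivially.

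For the inductive step, I would assume $g_n(x) = \sum_{k=0}^{n-1} c_{n,k} L^k f_{n-k}(x)$ and apply the recurrence \eqref{eq:recurrencewithL} in the operator form $g_{n+1} = (L + K)g_n$. Distributing $L+K$ over the sum gives two pieces,
\[
 g_{n+1}(x) = \sum_{k=0}^{n-1} c_{n,k} L^{k+1} f_{n-k}(x) + \sum_{k=0}^{n-1} c_{n,k}\, K L^{k} f_{n-k}(x).
\]
In the second sum I would invoke the commutativity lemma \eqref{eq:commutative} to write $K L^{k} = L^{k} K$, and then use the defining recursion $f_{m+1}=Kf_m$ from \eqref{eq:recurrencewithK} in the form $K f_{n-k} = f_{n-k+1}$, turning that term into $L^{k} f_{n-k+1}(x)$. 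Both sums are then of the form $L^{k} f_{n+1-k}$, so after shifting the index of the first sum ($k \mapsto k-1$) the two contributions can be collected into a single sum ranging over $k=0,\dots,n$.

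Matching the coefficient of $L^{k} f_{n+1-k}(x)$ on both sides yields the recurrence $c_{n+1,k} = c_{n,k-1} + c_{n,k}$, with the boundary convention $c_{n,k}=0$ for $k<0$ or $k>n-1$. This is exactly Pascal's rule, and together with the base value $c_{1,0}=1$ it identifies $c_{n,k} = \binom{n-1}{k}$; one checks this against the explicit expansion $g_3 = L^2 f_1 + 2Lf_2 + f_3$ in \eqref{eq:g3relatedtof3}, where the coefficients $1,2,1$ are indeed $\binom{2}{0}, \binom{2}{1}, \binom{2}{2}$. Since the theorem only asserts the existence of appropriate $c_{n,k}$, the Pascal recurrence alone closes the induction, but the explicit binomial identification is a welcome consistency check. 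The only real care needed is the index bookkeeping in the reindexing and the verification that the two boundary terms (the $k=0$ and $k=n$ extremes of the new sum) each receive exactly one contribution; the conceptual heart—commuting $K$ past $L^k$ via the lemma and recognizing the binomial recurrence—is immediate.
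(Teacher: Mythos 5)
Your proposal is correct and follows essentially the same route as the paper: induction on $n$, distributing $L+K$ over the assumed decomposition, commuting $K$ past $L^k$ via the lemma, absorbing $Kf_{n-k}$ into $f_{n+1-k}$, and reading off Pascal's rule to identify $c_{n,k}=\binom{n-1}{k}$. The only cosmetic difference is that the paper anchors the base case on the explicit computations for $n=1,2,3$ rather than $n=1$ alone.
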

   \begin{proof}
     We use induction over $ n $. The relation given in equation (\ref{eq:gnrelatedtofn}) is valid for $n=1,2,3 $, as was shown in equations (\ref{eq:g1eqalsf1}), (\ref{eq:g2relatedtof2}) and (\ref{eq:g3relatedtof3}). Suppose that the relation given in (\ref{eq:gnrelatedtofn}) is valid for some $ n \in \mathbb{N }$. Then by  the induction assumption, (\ref{eq:recurrencewithL}), (\ref{eq:recurrencewithK}),  and (\ref{eq:commutative})
     \begin{align}\label{eq:provedbyinduction}
       g_{n+1}(x) & = Lg_n(x) + K g_n(x) \nonumber  \\
        & = L \sum_{k=0}^{n-1} c_{n,k}  L^k f_{n-k}(x)+ K \sum_{k=0}^{n-1} c_{n,k} L^k f_{n-k}(x)  \nonumber \\
        & = \sum_{k=0}^{n-1} c_{n,k} L^{k+1} f_{n-k}(x)+   \sum_{k=0}^{n-1} c_{n,k}K L^k  f_{n-k}(x)  \nonumber \\
        & = \sum_{k=0}^{n-1} c_{n,k} L^{k+1} f_{n-k}(x)+   \sum_{k=0}^{n-1} c_{n,k} L^k K f_{n-k}(x)  \nonumber \\
        & = \sum_{k=0}^{n-1} c_{n,k}  L^{k+1} f_{n-k}(x)+ \sum_{k=0}^{n-1} c_{n,k}  L^k f_{n+1-k}(x)  \nonumber \\
        & =  \sum_{k = 0}^{n}  c_{n+1,k} L^k f_{n+1-k} ,
     \end{align}
where
\begin{equation}\label{eq:cofficentsforgk}
  \begin{cases}
c_{n+1,0} =  c_{n,0}= 1,\\
c_{n+1,k} =  c_{n,k} +  c_{n,k-1},\quad k = 1,2,...,n-1, \\
c_{n+1,n} =  c_{n,n-1}= 1.
\end{cases}
\end{equation}
This is the recurrence relation for the binomial coefficients which are the solutions of the recurrence relation (\ref{eq:cofficentsforgk}).
Therefore
\begin{equation}\label{eq:coefficentsdetermined}
  c_{n,k}= \binom{n-1}{k}.
\end{equation}
Therefore
\begin{equation}\label{eq:gn}
g_n(x)=\sum_{k=0}^{n-1} \binom{n-1}{k} L^k f_{n-k}(x).
  \end{equation}
  Hence the theorem is proved.
   \end{proof}
\begin{theorem}
  The solution of the sequence (\ref{eq:integrodifference}) is given  in terms of  the elements of the sequence $f_n$ by
  \begin{equation}\label{eq:gnintermsoffn}
     g_n(x)= \sum_{k=0}^{n-1}\binom{n-1}{k}\sum_{r=0}^{k} (-1)^r\binom{k}{r}f_{n-k}(x+\frac{k}{2}-r).
  \end{equation}
  \end{theorem}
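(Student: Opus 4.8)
The plan is to start directly from the representation $g_n(x) = \sum_{k=0}^{n-1}\binom{n-1}{k} L^k f_{n-k}(x)$ established in the previous theorem (equation \eqref{eq:gn}) and to expand each operator power $L^k$ explicitly in terms of the elementary shifts. Since $L = E^{1/2} - E^{-1/2}$ by \eqref{eq:theoperatorL}, and since the shift operators all commute with one another (each merely translates the argument, and translations compose additively so that $E^{1/2}E^{-1/2}u(x) = u(x) = E^{-1/2}E^{1/2}u(x)$), the binomial theorem applies to the operator $L$ exactly as it does to a difference of two commuting scalars.

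First I would write
$$L^k = \left(E^{1/2} - E^{-1/2}\right)^k = \sum_{r=0}^{k}\binom{k}{r}(-1)^r \left(E^{1/2}\right)^{k-r}\left(E^{-1/2}\right)^{r}.$$
The key computational step is then to identify the action of the composite shift $\left(E^{1/2}\right)^{k-r}\left(E^{-1/2}\right)^{r}$ on an arbitrary function $u$. Applying the definitions in \eqref{leftandrightshifts} repeatedly gives $\left(E^{1/2}\right)^{k-r} u(x) = u\left(x + \tfrac{k-r}{2}\right)$ and $\left(E^{-1/2}\right)^{r} u(x) = u\left(x - \tfrac{r}{2}\right)$; composing the two and adding the displacements yields the single translation $u\left(x + \tfrac{k-r}{2} - \tfrac{r}{2}\right) = u\left(x + \tfrac{k}{2} - r\right)$.

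With $u = f_{n-k}$ this gives
$$L^k f_{n-k}(x) = \sum_{r=0}^{k}(-1)^r\binom{k}{r} f_{n-k}\left(x + \frac{k}{2} - r\right),$$
and substituting this into \eqref{eq:gn} produces precisely the double-sum formula \eqref{eq:gnintermsoffn}. I do not expect any serious obstacle here: the entire argument is an application of the binomial theorem for commuting operators followed by a bookkeeping of the net shift. The only points requiring a little care are to confirm that $E^{1/2}$ and $E^{-1/2}$ genuinely commute, so that the binomial expansion is legitimate, and to track the sign $(-1)^r$ together with the direction of each shift correctly when collapsing the composite operator into the single translation by $\tfrac{k}{2} - r$.
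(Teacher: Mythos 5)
Your proposal is correct and follows essentially the same route as the paper: both expand $L^k=\left(E^{1/2}-E^{-1/2}\right)^k$ by the binomial theorem for the commuting shift operators, collapse the composite shift to the single translation $E^{k/2-r}$, and substitute into \eqref{eq:gn}. No gaps; your explicit remark on why the shifts commute is a small extra care the paper leaves implicit.
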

\begin{proof}
  The binomial expansion of  $ L^k = \left( E ^ \frac{1}{2}- E ^ {-\frac{1}{2}}\right)^k $ that appear in (\ref{eq:gn}) yields
   \begin{equation}\label{eq:expansionofLtothek}
       L^k = \left( E ^ \frac{1}{2}- E ^ {-\frac{1}{2}}\right)^k =\sum_{r=0}^{k} (-1)^r\binom{k}{r}\left(E ^ {-\frac{1}{2}}\right)^r \left(E ^ {\frac{1}{2}}\right)^{k-r} = \sum_{r=0}^{k} (-1)^r \binom{k}{r} E ^{\frac{k}{2}-r}.
   \end{equation}
Replacing (\ref{eq:expansionofLtothek}) into (\ref{eq:gn}) we get
\begin{align*}
     g_n(x) & =\sum_{k=0}^{n-1}\binom{n-1}{k}\sum_{r=0}^{k} (-1)^r\binom{k}{r}E ^{\frac{k}{2}-r}f_{n-k}(x) \nonumber \\
      & = \sum_{k=0}^{n-1}\binom{n-1}{k}\sum_{r=0}^{k} (-1)^r\binom{k}{r}f_{n-k}(x+\frac{k}{2}-r ).
\end{align*}
Thus the theorem is proved.
\end{proof}

 At this point it is desirable to write the closed form solution of $g_n$ in terms of the closed form solution of $f_n$ without $ f_n$ being involved as in the case of (\ref{eq:gnintermsoffn}).  In (\ref{eq:gnintermsoffn}), for each $n \in \mathbb{N}$ the term $f_{n-k}$ equals $f_1 $ whenever $k=n-1$. The closed form  solution for the recursive sequence $ f_n$  given in (\ref{eq:fnforintegral}) is defined  for $n \geq 2 $, and $ f_1(x)= \chi_{(-1/2, 1/2)}(x)$ can not be derived from (\ref{eq:fnforintegral}). In deed, if we put $ n=1 $ in the right hand side of the equation (\ref{eq:fnforintegral}) we get

$$ \frac{1}{2}\sum_{i=0}^{1}\frac{(-1)^{1-i}}{i!(1-i)!}\frac{|x-1/2+i|}{x-1/2+i}= \frac{1}{2}\left(\frac{|x+1/2|}{x+1/2}-\frac{|x-1/2|}{x-1/2} \right)= \chi_{(-1/2, 1/2)}(x), \quad x \neq \pm 1/2, $$
 whereas $f_1(x) := \chi_{(-1/2, 1/2)}(x) $ is defined in $ \mathbb{R} $. In particular, $f_1(\pm 1/2)=0 $.
%

 \begin{theorem}
  For $ n \geq 2 $, the explicit solution of the sequence (\ref{eq:integrodifference}) is given by
  \begin{align*}
    g_n(x)&= \sum_{k=0}^{n-2} \sum_{r=0}^{k} \sum_{t=0}^{n-k} \binom{n-1}{k}  \binom{k}{r} \frac{n-k}{2}\frac{(-1)^{n-k-t}}{t!(n-k-t)!}\left(  x + k-r-\frac{n}{2}+t\right)^{n-k-2}| x + k-r-\frac{n}{2}+t| \\
     & + \sum_{r=0}^{n-1}\binom{n-1}{r}(-1)^r g_1(x-\frac{n-1}{2}-r).
  \end{align*}
  \end{theorem}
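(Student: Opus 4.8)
The plan is to obtain the stated closed form by substituting the explicit expression \eqref{eq:fnforintegral} for $f_{n-k}$ directly into the representation \eqref{eq:gnintermsoffn}. The only structural idea needed is the one already anticipated in the discussion preceding the theorem: the formula \eqref{eq:fnforintegral} is valid only for index at least $2$, so the summand of \eqref{eq:gnintermsoffn} in which $f_{n-k}$ degenerates to $f_1$ must be separated out and kept in terms of $g_1$.

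Concretely, I would first split the outer sum in \eqref{eq:gnintermsoffn} at $k=n-1$, isolating the single block $k=n-1$ (where $\binom{n-1}{n-1}=1$ and $f_{n-k}=f_1$) from the blocks $0\le k\le n-2$ (where $n-k\ge 2$). For the latter blocks I would insert \eqref{eq:fnforintegral} with $n$ replaced by $n-k$ and $x$ replaced by $x+\tfrac{k}{2}-r$. The one genuine computation is the simplification of the argument of the power-times-modulus factor: using $\tfrac{k}{2}-\tfrac{n-k}{2}=k-\tfrac{n}{2}$, the inner argument collapses to $x+k-r-\tfrac{n}{2}+t$, while the exponent becomes $(n-k)-2$, which reproduces exactly the triple sum displayed in the theorem. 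Throughout this step one must keep track of the two sign factors, the $(-1)^r$ inherited from the expansion of $L^k$ and the $(-1)^{n-k-t}$ coming from \eqref{eq:fnforintegral}.

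For the separated block $k=n-1$ the index is $n-k=1$, so \eqref{eq:fnforintegral} does not apply and I would instead invoke \eqref{eq:g1eqalsf1}, i.e.\ $f_1=g_1$, leaving this term as $\sum_{r=0}^{n-1}(-1)^r\binom{n-1}{r}\,g_1\!\left(x+\tfrac{n-1}{2}-r\right)$, the shift $+\tfrac{n-1}{2}-r$ arising directly from the operator $E^{\frac{n-1}{2}-r}$ in \eqref{eq:expansionofLtothek}. I expect no conceptual obstacle here; the proof is essentially an exercise in substitution, and the only delicate point is the bookkeeping of the binomial factors, the signs, and the precise shift in this boundary term so that it agrees with the displayed expression. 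Adding the two contributions then yields the asserted formula for $g_n$, $n\ge 2$.
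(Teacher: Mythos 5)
Your proposal is correct and follows essentially the same route as the paper: split \eqref{eq:gnintermsoffn} at $k=n-1$, substitute the shifted form of \eqref{eq:fnforintegral} for the blocks with $n-k\ge 2$, and retain the degenerate block as a signed binomial sum of shifts of $f_1=g_1$. Note only that the boundary term you (and the paper's own proof) obtain is $\sum_{r=0}^{n-1}(-1)^r\binom{n-1}{r}g_1\!\left(x+\tfrac{n-1}{2}-r\right)$, whereas the theorem as displayed writes $g_1\!\left(x-\tfrac{n-1}{2}-r\right)$; the latter appears to be a typographical slip in the statement, not a defect of your argument.
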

 %
\begin{proof}
From (\ref{eq:gnintermsoffn}) we have
\begin{equation}\label{eq:indexseparated}
   g_n(x)= \sum_{k=0}^{n-2}\binom{n-1}{k}\sum_{r=0}^{k} (-1)^r\binom{k}{r}f_{n-k}(x+\frac{k}{2}-r)  +  \sum_{r=0}^{n-1}(-1)^r \binom{n-1}{r}f_{1}(x +\frac{n-1}{2}-r).
\end{equation}
   According to (\ref{eq:fnforintegral}), for $ n\geq 2 $
   \begin{equation}\label{eq:fnminusk}
     f_{n-k}(x)= \frac{n-k}{2}\sum_{t=0}^{n-k}\frac{(-1)^{n-k-t}}{t!(n-k-t)!}\left( x-\frac{n-k}{2}+ t\right)^{n-k-2}| x-\frac{n-k}{2}+t|,\quad 0 \leq k \leq n-2.
   \end{equation}
Therefore shifting $ f_{n-k}$  that is written in (\ref{eq:fnminusk})  by $ \frac{k}{2}-r $ units yields
 \begin{equation}\label{eq:fnminuskshifted}
     f_{n-k}(x+\frac{k}{2}-r)= \frac{n-k}{2}\sum_{t=0}^{n-k}\frac{(-1)^{n-k-t}}{t!(n-k-t)!}\left( x +k-r-\frac{n}{2}+ t \right)^{n-k-2}| x + k-r-\frac{n}{2}+t|,\quad  0 \leq k \leq n-2.
   \end{equation}
Now plugging the expression of $f_{n-k}(x+\frac{k}{2}-r)$ that is written (\ref{eq:fnminuskshifted}) into (\ref{eq:indexseparated}), we get a complete closed-form solution of the recursive integrodifference sequence $(g_n)$ that is defined in (\ref{eq:integrodifference}) as:

   \begin{align}\label{eq:gncompleted}
     g_{n}(x) &= \sum_{k=0}^{n-2} \binom{n-1}{k} \sum_{r=0}^{k} \binom{k}{r} \frac{n-k}{2}\sum_{t=0}^{n-k}\frac{(-1)^{n-k-t}}{t!(n-k-t)!}\left(  x + k-r-\frac{n}{2}+t\right)^{n-k-2}| x + k-r-\frac{n}{2}+t| \nonumber\\
     & + \sum_{r=0}^{n-1}(-1)^r \binom{n-1}{r}f_{1}(x +\frac{n-1}{2}-r) \nonumber \\
      &= \sum_{k=0}^{n-2} \sum_{r=0}^{k} \sum_{t=0}^{n-k} \binom{n-1}{k}  \binom{k}{r} \frac{n-k}{2}\frac{(-1)^{n-k-t}}{t!(n-k-t)!}\left(  x + k-r-\frac{n}{2}+t\right)^{n-k-2}| x + k-r-\frac{n}{2}+t| \nonumber \\
      & + \sum_{r=0}^{n-1}(-1)^r \binom{n-1}{r}g_{1}(x +\frac{n-1}{2}-r).
   \end{align}
This completes the proof.
  \end{proof}

  \section*{Conclusions}
  In this paper we have computed the closed form expression  for a sequence of functions defined via recursive integro-difference relation. The contrasting properties of differences (way similar to differentials) and integrals make the calculations harder. However  the considerations of symmetry of the problem and the authors use of his perviously published results make the task manageable. The real life applications of such problems were mentioned. The author believes that problem can be diversified and upgraded with  further applications. Interested researcher can consider a similar problem with a different initial element $g_1$.

\section*{Conflict of interests}
The author declare that there is no conflict of interests regarding the publication of this paper.

\section*{Funding}
This research work is not funded by any organization or individual/s.

\section*{Acknowledgment}
The author is thankful to the anonymous reviewers for their constructive and valuable suggestions.

\section*{Data availability}
Data sharing is not applicable to this article as no data were collected or analysed during the current study.

\end{document}